\newtheorem{Theo}[subsubsection]{Theorem}
\newtheorem{Theor}{Theorem}
\newtheorem{Theore}{Theorem}
\newtheorem{cor}[subsubsection]{Corollary}
\newtheorem{lem}[subsubsection]{Lemma}
\newtheorem{prop}[subsubsection]{Proposition}
\theoremstyle{definition}
\newtheorem{fact}[subsubsection]{Fact} 
\newtheorem{rem}[subsubsection]{Remark}
\newtheorem{Convention}[subsubsection]{Convention}
\newtheorem{question}[subsubsection]{Question}
\newtheorem{Obs}[subsubsection]{Observation}
\newtheorem{example}[subsubsection]{Example}
\newtheorem{definition}[subsubsection]{Definition}
\newtheorem{ex}[subsection]{Exercise}
\DeclareSymbolFont{largesymbols}{OMX}{yhex}{m}{n}
\DeclareMathAccent{\widetilde}{\mathord}{largesymbols}{"65}
\newcommand{\bp}{\begin{prop}}
\newcommand{\ep}{\end{prop}}
\newcommand{\bl}{\begin{lem}}
\newcommand{\el}{\end{lem}}
\newcommand{\bex}{\begin{ex} \rm}
\newcommand{\eex}{\end{ex}}
\newcommand{\bt}{\begin{Theo}}
\newcommand{\et}{\end{Theo}}
\newcommand{\bq}{\begin{question}}
\newcommand{\eq}{\end{question}}
\newcommand{\bc}{\begin{cor}}
\newcommand{\ec}{\end{cor}}
\newcommand{\bob}{\begin{Obs}}
\newcommand{\eob}{\end{Obs}}
\newcommand{\nc}{\newcommand}
\nc{\renc}{\renewcommand}
\nc{\ssec}{\subsection}
\nc{\sssec}{\subsubsection} 
\nc\ol{\overline}
\nc\wt{\widetilde}
\nc\wh{\widehat}
\nc\tboxtimes{\wt{\boxtimes}}
\renc{\d}{{\delta}}
\nc{\Aa}{{\mathbb{A}}}
\nc{\Bb}{{\mathbb{B}}}
 \nc{\Gg}{{\mathbb{G}}}  
\nc{\Hh}{{\mathbb{H}}}
 \nc{\Nn}{{\mathbb{N}}}
\nc{\Pp}{{\mathbb{P}}}
\nc{\Rr}{{\mathbb{R}}}
\newcommand{\F}{\mathbb{F}}
\nc{\BV}{{\mathbb{V}}}
\nc{\BW}{{\mathbb{W}}}
\newcommand{\Z}{\mathbb{Z}}
\newcommand{\N}{\mathbb{N}}
\nc{\Qq}{{\mathbb{Q}}}
\nc{\Ss}{{\mathbb{S}}}
\nc{\Cc}{{\mathbb{C}}}
\nc{\Ff}{{\mathbb{F}}}
 \nc{\EL}{{L_\infty}}
\nc{\CA}{{\mathcal{A}}}
\nc{\CB}{{\mathcal{B}}}
\nc{\CE}{{\mathcal{E}}}
\nc{\CF}{{\mathcal{F}}}
\nc{\Las}{\mathsf{Las}}
\nc{\CG}{{\mathcal{G}}}
\nc{\CL}{{\mathcal{L}}}
\nc{\CC}{{\mathcal{C}}}
\nc{\CM}{{\mathcal{M}}}
\nc{\CN}{{\mathcal{N}}}
\nc{\Oog}{{\mathbb{O}}}
\nc{\Oo}{{\mathcal{O}}}
\nc{\CP}{{\mathcal{P}}}
\nc{\CQ}{{\mathcal{Q}}}
\nc{\CR}{{\mathcal{R}}}
\nc{\CS}{{\mathcal{S}}}
\nc{\CT}{{\mathcal{T}}}
\nc{\CU}{{\mathcal{P}}}
\nc{\CV}{{\mathcal{V}}}
\nc{\CW}{{\mathcal{W}}}
\nc{\CZ}{{\mathcal{Z}}}
\nc{\cM}{{\check{\mathcal M}}{}}
\nc{\csM}{{\check{\mathcal A}}{}}
\nc{\oM}{{\overset{\circ}{\mathcal M}}{}}
\nc{\obM}{{\overset{\circ}{\mathbf M}}{}}
\nc{\oCA}{{\overset{\circ}{\mathcal A}}{}}
\nc{\obA}{{\overset{\circ}{\mathbf A}}{}}
\nc{\ooM}{{\overset{\circ}{M}}{}}
\nc{\osM}{{\overset{\circ}{\mathsf M}}{}}
\nc{\vM}{{\overset{\bullet}{\mathcal M}}{}}
\nc{\nM}{{\underset{\bullet}{\mathcal M}}{}}
\nc{\oD}{{\overset{\circ}{\mathcal D}}{}}
\nc{\obD}{{\overset{\circ}{\mathbf D}}{}}
\nc{\oA}{{\overset{\circ}{\mathbb A}}{}}
\nc{\op}{{\overset{\bullet}{\mathbf p}}{}}
\nc{\cp}{{\overset{\circ}{\mathbf p}}{}}
\nc{\oU}{{\overset{\bullet}{\mathcal U}}{}}
\nc{\oZ}{{\overset{\circ}{\mathcal Z}}{}}
\nc{\ofZ}{{\overset{\circ}{\mathfrak Z}}{}}
\nc{\oF}{{\overset{\circ}{\fF}}}
\nc{\fa}{{\mathfrak{a}}}
\nc{\fb}{{\mathfrak{b}}}
\nc{\fg}{{\mathfrak{g}}}
\nc{\fgt}{{\fg}_!}
\nc{\fgl}{{\mathfrak{gl}}}
\nc{\fh}{{\mathfrak{h}}}
\nc{\fj}{{\mathfrak{j}}}
\nc{\fm}{{\mathfrak{m}}}
\nc{\ft}{{\mathfrak{t}}}
\nc{\fn}{{\mathfrak{n}}}
\nc{\fu}{{\mathfrak{u}}}
\nc{\fp}{{\mathfrak{p}}}
\nc{\fr}{{\mathfrak{r}}}
\nc{\fs}{{\mathfrak{s}}}
\nc{\fsl}{{\mathfrak{sl}}}
\nc{\hsl}{{\widehat{\mathfrak{sl}}}}
\nc{\hgl}{{\widehat{\mathfrak{gl}}}}
\nc{\hg}{{\widehat{\mathfrak{g}}}}
\nc{\chg}{{\widehat{\mathfrak{g}}}{}^\vee}
\nc{\hn}{{\widehat{\mathfrak{n}}}}
\nc{\chn}{{\widehat{\mathfrak{n}}}{}^\vee}
\nc{\fA}{{\mathfrak{A}}}
\nc{\fB}{{\mathfrak{B}}}
\nc{\fD}{{\mathfrak{D}}}
\nc{\fE}{{\mathfrak{E}}}
\nc{\fF}{{\mathfrak{F}}}
\nc{\fG}{{\mathfrak{G}}}
\nc{\fK}{{\mathfrak{K}}}
\nc{\fL}{{\mathfrak{L}}}
\nc{\fM}{{\mathfrak{M}}}
\nc{\fN}{{\mathfrak{N}}}
\nc{\fP}{{\mathfrak{P}}}
\nc{\fU}{{\mathfrak{U}}}
\nc{\fV}{{\mathfrak{V}}}
\nc{\fZ}{{\mathfrak{Z}}}
\newcommand{\Q}{\mathbb{Q}}
\nc{\bb}{{\mathbf{b}}}
\nc{\bd}{\partial}
\nc{\be}{{\mathbf{e}}}
\nc{\bj}{{\mathbf{j}}}
\nc{\bn}{{\mathbf{n}}}
\nc{\bF}{{\mathbf{F}}}
\nc{\bu}{{\mathbf{u}}}
\nc{\bv}{{\mathbf{v}}}
\nc{\bx}{{\mathbf{x}}}
\nc{\bs}{{\mathbf{s}}}
\nc{\by}{{\bar{y}}}
\nc{\bw}{{\mathbf{w}}}
\nc{\bA}{{\mathbf{A}}}
\nc{\bK}{{\mathbf{K}}}
\nc{\bI}{{\mathbf{I}}}
\nc{\bB}{{\mathbf{B}}}
\nc{\bG}{{\mathbf{G}}}
\nc{\bD}{{\mathbf{D}}}
\nc{\bP}{{\mathbf{P}}}
\nc{\bH}{{\mathbf{H}}}
\nc{\bM}{{\mathbf{M}}}
\nc{\bN}{{\mathbf{N}}}
\nc{\bV}{{\mathbf{V}}}
\nc{\bU}{{\mathbf{U}}}
\nc{\bL}{{\mathbf{L}}}
\nc{\bW}{{\mathbf{W}}}
\nc{\bX}{{\mathbf{X}}}
\nc{\bY}{{\mathbf{Y}}}
\nc{\bZ}{{\mathbf{Z}}}
\nc{\bS}{{\mathbf{S}}}
\nc{\bSi}{{\bar{\Sigma}}}
\nc{\sA}{{\mathsf{A}}}
\nc{\sB}{{\mathsf{B}}}
\nc{\sC}{{\mathsf{C}}}
\nc{\sD}{{\mathsf{D}}}
\nc{\sF}{{\mathsf{F}}}
\nc{\sG}{{\mathsf{G}}}
\nc{\sK}{{\mathsf{K}}}
\nc{\sM}{{\mathsf{M}}}
\nc{\sO}{{\mathsf{O}}}
\nc{\sQ}{{\mathsf{Q}}}
\nc{\sP}{{\mathsf{P}}}
\nc{\sZ}{{\mathsf{Z}}}
\nc{\sfp}{{\mathsf{p}}}
\nc{\sr}{{\mathsf{r}}}
\nc{\sg}{{\mathsf{g}}}
\nc{\sff}{{\mathsf{f}}}
\nc{\sfb}{{\mathsf{b}}}
\nc{\sfc}{{\mathsf{c}}}
\nc{\sd}{{\ltimes}} 
\nc{\tH}{{\widetilde{H}}}
\nc{\tA}{{\widetilde{\mathbf{A}}}}
\nc{\tB}{{\widetilde{\mathcal{B}}}}
\nc{\tg}{{\widetilde{\mathfrak{g}}}}
\nc{\tG}{{\widetilde{G}}}
\nc{\TM}{{\widetilde{\mathbb{M}}}{}}
\nc{\tO}{{\widetilde{\mathsf{O}}}{}}
\nc{\tU}{\widetilde{U}}
\nc{\TZ}{{\tilde{Z}}}
\nc{\tx}{{\tilde{x}}}
\nc{\tq}{{\tilde{q}}}
\nc{\tfP}{{\widetilde{\mathfrak{P}}}{}}
\nc{\tz}{{\tilde{\zeta}}}
\nc{\tmu}{{\tilde{\mu}}}
  \nc{\vol}{{\mathop{\operatorname{\rm vol\,}}}}
  \nc{\gal}{{\mathop{\operatorname{\rm Gal\,}}}}
  \nc{\cl}{{\mathop{\operatorname{\rm cl}}}}
  \nc{\disc}{{\mathop{\operatorname{\rm disc}}}}
  \nc{\Sym}{{\mathop{\operatorname{\rm Sym}}}}
   \nc{\Aut}{{\mathop{\operatorname{\rm Aut}}}}
 \nc{\Spec}{{\mathop{\operatorname{\rm Spec}}}}
  \nc{\spec}{{\mathop{\operatorname{\rm Spec}}}}
\nc{\Ker}{{\mathop{\operatorname{\rm Ker}}}}
 \nc{\dom}{{\mathop{\operatorname{\rm dom}}}}
\nc{\End}{{\mathop{\operatorname{\rm End}}}}
 \nc{\Hom}{\operatorname{\Hom}}
 \nc{\GL}{{\mathop{\operatorname{\rm GL}}}}
 \nc{\Id}{{\mathop{\operatorname{\rm Id}}}}
 \nc{\rk}{{\mathop{\operatorname{\rm rk}}}}
 \nc{\length}{{\mathop{\operatorname{\rm length}}}}
\nc{\supp}{{\mathop{\operatorname{\rm supp} \, }}}
\nc{\val}{{\rm val}}
\nc{\res}{{\mathop{\operatorname{\rm res}}}}
\def\Ind#1#2#3{{#1} {\downarrow}_{#3} {#2} }
\nc{\seq}[1]{\stackrel{#1}{\sim}}
\def\beq#1{\begin{equation} \label{ #1}}
\def\eeq{\end{equation}}
\def\prf{\begin{proof}}
\def\pv{\end{proof} }
 \def\eprf{\end{proof} }
 \renc{\b}{{\beta}}
\def\Ind#1#2{#1\setbox0=\hbox{$#1x$}\kern\wd0\hbox to 0pt{\hss$#1\mid$\hss}
\lower.9\ht0\hbox to 0pt{\hss$#1\smile$\hss}\kern\wd0}
\def\@setthanks{\vspace{-\baselineskip}\def\thanks##1{\@par##1\@addpunct.}\thankses}
\title[Undecidability of the asymptotic theory of $p$-adic fields]{An undecidability result for the asymptotic theory of $p$-adic fields}
\author{Konstantinos Kartas}
\thanks{During this research, the author was funded by EPSRC grant EP/20998761 and was also supported by the Onassis Foundation - Scholarship ID: F ZP 020-1/2019-2020.}
\newcommand{\Addresses}{{
  \bigskip
  \footnotesize

\textsc{Mathematical Institute, Woodstock Road, Oxford OX2 6GG.}\par\nopagebreak
  \textit{E-mail address}: \texttt{kartas@maths.ox.ac.uk}
}}
\begin{document}
\maketitle

\begin{abstract}
Fix a prime $p$. We prove that the set of sentences true in all but finitely many finite extensions of $\Q_p$ is undecidable in the language of valued fields with a cross-section. The proof goes via reduction to positive characteristic, ultimately adapting Pheidas' proof of the undecidability of $\F_p(\!(t)\!)$ with a cross-section. This answers a variant of a question of Derakhshan-Macintyre. 
\end{abstract}
\setcounter{tocdepth}{1}
\tableofcontents

\section*{Introduction}
Let $L_{\text{val},\times}$ be the language of valued fields with a cross-section (see \S \ref{AKElanguage}). Ax-Kochen \cite{AK} and independently Ershov \cite{Ershov} showed that $\Q_p$, equipped with the normalized cross section $s:n\mapsto p^n$, is decidable in $L_{\text{val},\times}$. Every finite extension of $\Q_p$ is also decidable in $L_{\text{val},\times}$, for a suitable choice of a cross-section (see \S 2.4 \cite{ThesisKartas}). Combining the results of \cite{AK} with the theory of pseudofinite fields, Ax showed in Theorem 17 \cite{Ax} that the (asymptotic) theory of $\{\Q_p:p\in \mathbb{P}\}$ is decidable in $L_{\text{val},\times}$ and also that the (asymptotic) theory of $\{ \Q_p(\zeta_n):p\nmid n\}$, namely the collection of all finite \textit{unramified} extensions of $\Q_p$, is decidable in $L_{\text{val},\times}$. Recall that the asymptotic theory of a class $\mathcal{C}$ of $L$-structures is defined to be the set of sentences $\phi \in L$ which are true in all but finitely many $M\in \mathcal{C}$. 

In sharp contrast, we have that $\F_p(\!(t)\!)$ is \textit{undecidable} in $L_{\text{val},\times}$. This was already known to Ax (unpublished) and an elementary proof was given later by Becker-Denef-Lipshitz \cite{BDL}, which was also reworked by Cherlin in \S 4 \cite{Cherlin}. Pheidas \cite{Pheid} generalized the result for $k(\!(t)\!)$, where $k$ is an \textit{arbitrary} field of characteristic $p$, and showed that already the \textit{existential} $L_{\text{val},\times}$-theory is undecidable. The decidability problem for $\F_p(\!(t)\!)$ in the language of rings $L_{\text{rings}}$ is still an open problem.

An important related open question in mixed characteristic is whether $\text{Th}(\{K:[K:\Q_p]<\infty\})$, i.e. the theory of \textit{all} finite $p$-adic extensions, is decidable. This question was first raised (in print) by Derakhshan-Macintyre in \S 9 \cite{DerMac} for the language of rings $L_{\text{rings}}$. In Theorem 9.1 \cite{DerMac}, they showed that the theory of adele rings of \textit{all} number fields in $L_{\text{rings}}$ is decidable if and only if $\text{Th}(\{K:[K:\Q_p]<\infty\})$ is decidable in $L_{\text{rings}}$, for each prime $p$. 
It is also natural to consider $\text{Th}(\{K:[K:\Q_p]<\infty\})$ in any of the standard expansions $L$ of $L_{\text{rings}}$ (see e.g., pg. 20-22 \cite{Der}), and ask whether it is decidable (Problem 6.2 \cite{Der}).\\

In the present paper, we resolve negatively the asymptotic version of this problem in the presence of a cross-section. While each \textit{individual} finite $p$-adic extension is decidable in $L_{\text{val},\times}$, the asymptotic theory of all of them is less well-behaved:
\begin{Theore} \label{main}
The asymptotic $L_{\text{val},\times}$-theory of $\{K:[K:\Q_p]<\infty\}$ is undecidable.
\end{Theore}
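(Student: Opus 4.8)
The plan is to reduce the undecidability problem for the asymptotic theory in mixed characteristic to the known undecidability of $\F_p(\!(t)\!)$ (or more precisely of a suitable positive-characteristic Laurent series field) in $L_{\text{val},\times}$, exploiting the fact that high ramification makes finite extensions of $\Q_p$ ``look like'' positive characteristic. Concretely, I would first isolate a sequence of totally ramified extensions $K_e/\Q_p$ of degree $e\to\infty$ (for instance the fields $\Q_p(p^{1/e})$, or $\Q_p(\pi_e)$ with $\pi_e$ a uniformizer with large ramification) and recall that, as the ramification index $e$ grows, the residue ring $\Oo_{K_e}/\fm_{K_e}^N$ for fixed $N$ becomes uniformly interpretable, and — crucially — that an $L_{\text{val},\times}$-sentence of bounded quantifier complexity cannot distinguish $\Oo_{K_e}$ from $\F_p[\![t]\!]$ once $e$ is large relative to that complexity. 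The mechanism is the coarsening of the valuation: for a sentence $\varphi$, the truth of $\varphi$ in $K_e$ with its natural cross-section stabilizes, as $e\to\infty$, to the truth of $\varphi$ in $\F_p(\!(t)\!)$ with its cross-section $s:n\mapsto t^n$. This gives a map $\varphi\mapsto (\text{eventual truth value in the }K_e)$ that agrees with truth in $\F_p(\!(t)\!)$, and since the latter theory is undecidable, so is its image; one then has to check that ``true in all but finitely many finite extensions of $\Q_p$'' refines (or is Turing-equivalent to something refining) this, so undecidability transfers.

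The key steps, in order, are: (1) fix the explicit family $K_e/\Q_p$ totally ramified of degree $e$, each with a definable uniformizer $\pi_e$ and the cross-section $s_e:\Z\to K_e^\times$, $n\mapsto\pi_e^n$; (2) prove a transfer/Ax--Kochen-style lemma: for every $L_{\text{val},\times}$-sentence $\varphi$ there is $N(\varphi)$ such that for all $e\geq N(\varphi)$, $K_e\models\varphi \iff \F_p(\!(t)\!)\models\varphi$ — this is the heart of the argument and should be extracted from an Ax--Kochen--Ershov principle applied to a coarsened valuation, together with the observation that the cross-section and the value group $\Z$ are matched up compatibly; (3) adapt Pheidas' construction, which shows undecidability (indeed of the existential theory) of $k(\!(t)\!)$ for $k$ of characteristic $p$, so that the sentences he produces are uniform enough to be ``read off'' in the $K_e$ — i.e. Pheidas' undecidable set of sentences, pulled back via step (2), yields an undecidable subset of the asymptotic theory of $\{K_e\}$; (4) conclude that the asymptotic $L_{\text{val},\times}$-theory of the full class $\{K:[K:\Q_p]<\infty\}$ is undecidable, by arranging that the distinguishing sentences also control behaviour on the extensions \emph{not} in the chosen family (e.g. by relativizing to a definable condition picking out ``ramification index divisible by a large power of $p$'' or by noting the finite extensions outside $\{K_e\}$ form a set one can ignore up to the asymptotic quantifier, after a suitable reindexing).

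I expect the main obstacle to be step (2), the transfer lemma, and specifically making it work \emph{with the cross-section present}. Without the cross-section, an Ax--Kochen type comparison between $K_e$ (coarsened) and $\F_p(\!(t)\!)$ is delicate because $K_e$ genuinely has mixed characteristic and its valuation ring is not of equicharacteristic type; the standard trick is to pass to a coarsening of the valuation whose residue field is the \emph{finely-valued} field again, but here one wants the opposite — to see the whole $K_e$ through a positive-characteristic lens. The honest way is probably an ultraproduct argument: $\prod_{e\to\infty}K_e/\,\mathcal{U}$ is a henselian valued field of residue characteristic $p$ whose value group is a non-standard model of $\Z$ containing an infinitely-divisible-looking element (the class of $v(\pi_e)$ relative to $v(p)$), and one shows this ultraproduct is elementarily equivalent in $L_{\text{val},\times}$ to $\F_p(\!(t)\!)$ — or at least has the same $L_{\text{val},\times}$-theory for the purposes of Pheidas' sentences — using that in the ultraproduct $p$ becomes a non-standard power of the uniformizer, effectively killing the characteristic-zero features that Pheidas' encoding would otherwise see. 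Getting the cross-section to survive this ultraproduct/coarsening passage, and checking that Pheidas' existential sentences (which encode Hilbert's tenth problem over $\F_p[t]$ via the valuation and the additive/multiplicative structure) are genuinely insensitive to the residual mixed-characteristic phenomena, is where the real work lies; the rest is bookkeeping with the asymptotic quantifier.
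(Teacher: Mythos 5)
Your guiding intuition (high ramification makes $K$ look like positive characteristic, \`a la Krasner--Kazhdan--Deligne) is the same as the paper's, but the central step (2) of your plan is false as stated, and the argument collapses there. There is no transfer lemma of the form: for every $L_{\text{val},\times}$-sentence $\varphi$ there is $N(\varphi)$ with $K_e\models\varphi\iff\F_p(\!(t)\!)\models\varphi$ for all $e\geq N(\varphi)$. The sentence $\exists x\,(px=1)$ (even existential, even in the ring language) holds in every finite extension of $\Q_p$ and fails in $\F_p(\!(t)\!)$, so the eventual truth values in the $K_e$ do not agree with truth in $\F_p(\!(t)\!)$ for any reasonable fragment. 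The same problem defeats your fallback ultraproduct argument: $\prod_e K_e/\mathcal{U}$ has characteristic $0$ (it is a mixed-characteristic henselian field), so it is not elementarily equivalent to $\F_p(\!(t)\!)$ in any language extending $L_{\text{rings}}$; the equicharacteristic-$p$ object only appears as the quotient $\Oo_{K^*}/\bigcap_n(\varpi^n)\cong\F_p[\![t]\!]$, i.e.\ as a hyperimaginary, and that quotient is not something you can pass decidability questions through without further work. Consequently you cannot quote Pheidas' undecidability of $\F_p(\!(t)\!)$ as a black box and ``pull back'' his sentences.

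What the paper does instead is to work only with the genuinely transferable piece of structure: for $K/\Q_p$ totally ramified of degree $n$, $\Oo_K/(p)\cong\F_p[t]/(t^n)$, and this residue ring (with a predicate for powers of $t$ coming from the cross-section) is uniformly $\exists\forall$-interpretable in $K$. One then has to \emph{redo} Pheidas' argument in truncated form: his criterion for $n\mid_p m$ via $t^{-m}-t^{-n}=a^{-p}-a^{-1}$ must be reproved in $\F_p[t]/(t^N)$, where reducing modulo $t^N$ introduces an error term $\varepsilon=t^{N-m-n}za^{-p}$ that is only harmless if the witnesses are forced to be small (hence the side condition $\alpha^{3p}\neq 0$ and the constraint $m<N/3$), and the encoding of the Diophantine problem over $(\N;0,1,+,\mid_p)$ into the \emph{asymptotic} existential theory of the truncations requires an explicit bound ($N>3Mm$ with $M$ a norm of the coefficient matrices) so that truncated addition and truncated $p$-divisibility agree with the untruncated ones on the relevant witnesses. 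Your step (4) is also too loose: the extensions outside your family are infinite in number, so they cannot be ``ignored up to the asymptotic quantifier''; the paper relativizes to totally ramified extensions by the definable condition $k=\F_p$, and uses that there are only finitely many totally ramified extensions of each degree to convert ``for almost all $n$'' into ``for almost all $K$''. In short: the skeleton (ramification $\to$ positive characteristic $\to$ Pheidas $\to$ asymptotic statement) is right, but the transfer must happen at the level of residue rings modulo $p$ with a quantitatively truncated version of Pheidas' lemma, not at the level of an elementary comparison between $K_e$ and $\F_p(\!(t)\!)$, which is impossible.
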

We note that for each $K$ with $[K:\Q_p]< \infty$ there are many choices of a cross-section and \textit{any} such choice will lead to an undecidability result (see Convention \ref{convent}). By carefully keeping track of quantifiers, we will prove in \S \ref{mainsec} that already the asymptotic $\exists \forall $-theory of $\{K:[K:\Q_p]<\infty\}$ is undecidable in $L_{\text{val},\times}$. 

The key idea of the proof will be to use highly ramified $p$-adic fields to approximate $\F_p(\!(t)\!)$  \`a la Krasner-Kazhdan-Deligne (see Section \ref{localfieldapprox}) and then adapt Pheidas' proof of the undecidability of $\F_p(\!(t)\!)$ in $L_{\text{val},\times}$ (see \cite{Pheid}). In more detail, the proof of Theorem \ref{main} consists of the following steps:
\begin{enumerate}
\item Encode the asymptotic theory of totally ramified $p$-adic fields, i.e., the asymptotic theory of $\{K:[K:\Q_p]=e(K/\Q_p)<\infty\}$ (see the proof of Theorem \ref{mainagain}).

\item Observe that $\Oo_K/(p)\cong \F_p[t]/(t^e)$, for $K/\Q_p$ totally ramified of degree $e$, and thereby encode the asymptotic theory of $\{\F_p[t]/(t^n):n\in \N\}$ in $L_t$ with a predicate for powers of $t$ (see Corollary \ref{0topinter}).

\item Encode the asymptotic theory of \textit{truncated fragments} of $(\N;0,1,+,|_p)$ (see \S \ref{truncatedversions}), where $m\mid_p n$ if and only if $n=p^s\cdot m$ for some $s\in \N$.  

\item Show that the latter is undecidable by encoding the Diophantine problem of $(\N;0,1,+,|_p)$ (see \S \ref{undecoffragments}).

\end{enumerate}

We note that Derakhshan-Macintyre had already suggested that the difficulty in understanding the (asymptotic) theory of $p$-adic fields must lie in unbounded ramification (see the last paragraph of \cite{DerMac}). This is precisely the ingredient that makes the transition to positive characteristic work and is the reason why this asymptotic class exhibits a different behavior from the two asymptotic theories mentioned in the first paragraph of the introduction.
\section{Preliminaries}

\subsection{Interpretability} \label{intersec}
Our formalism follows closely \S 5.3 \cite{Hod}, where details and proofs may be found. 

\subsubsection{Interpretations of structures}
Given a language $L$, an \textit{unnested} atomic $L$-formula is one of the form $x=y$ or $x=c$ or $F(\overline{x})=y$ or $R\overline{x}$, where $x,y$ are variables, $c$ is a constant symbol, $F$ is a function symbol and $R$ is a relation symbol of the language $L$.
\begin{definition} \label{interdef}
An $n$-dimensional interpretation of an $L$-structure $M$ in the $L'$-structure $N$ is a triple consisting of:
\begin{enumerate}
\item An $L'$-formula $\partial_{\Gamma}(x_1,...,x_n)$.
\item A map $\phi\mapsto \phi_{\Gamma}$, that takes an unnested atomic $L$-formula $\phi(x_1,...,x_m)$ and sends it to an $L'$-formula $\phi_{\Gamma}(\overline{y}_1,...,\overline{y}_m)$, where each $\overline{y}_i$ is an $n$-tuple of variables.

\item A surjective map $f_{\Gamma}:\partial_{\Gamma}(N^n)\twoheadrightarrow M$.
\end{enumerate}
such that for all unnested atomic $L$-formulas $\phi(x_1,...,x_m)$ and all $\overline{a}_i\in \partial_{\Gamma}(N^n)$, we have 
$$M\models \phi(f_{\Gamma} (\overline{a}_1), ...,f_{\Gamma} (\overline{a}_m)) \iff N \models  \phi_{\Gamma}(\overline{a}_1,...,\overline{a}_m)  $$
\end{definition}
An \textit{interpretation} of an $L$-structure $M$ in the $L'$-structure $N$ is an $n$-dimensional interpretation, for some $n\in \N$. In that case, we also say  that $M$ is \textit{interpretable} in $N$. The formulas $\partial_{\Gamma}$ and $\phi_{\Gamma}$ (for all unnested atomic $\phi$) are the \textit{defining formulas} of $\Gamma$. 
%
%

\bp [Reduction Theorem 5.3.2 \cite{Hod}]\label{prophod}
Let $\Gamma$ be an $n$-dimensional interpretation of an $L$-structure $M$ in the $L'$-structure $N$. There exists a map $\phi\mapsto \phi_{\Gamma}$, extending the map of Definition \ref{interdef}$(2)$, such that for every $L$-formula $\phi(x_1,...,x_m)$ and all $\overline{a}_i\in \partial_{\Gamma}(N^n)$, we have that
$$M\models \phi(f_{\Gamma} (\overline{a}_1), ...,f_{\Gamma} (\overline{a}_m)) \iff N \models  \phi_{\Gamma}(\overline{a}_1,...,\overline{a}_m)  $$
\ep 
\begin{proof}
We describe how $\phi \mapsto \phi_{\Gamma}$ is built, for completeness (omitting details). By Corollary 2.6.2 \cite{Hod}, every $L$-formula is equivalent to one in which all atomic subformulas are unnested. One can then construct $\phi \mapsto \phi_{\Gamma}$ by induction on the complexity of formulas. The base case is handled by Definition \ref{interdef}$(2)$. This definition extends inductively according to the following rules:
\begin{enumerate}
\item $(\lnot \phi)_{\Gamma}=\lnot(\phi)_{\Gamma}$.
\item $(\bigwedge_{i=1}^n \phi_i)_{\Gamma}=\bigwedge (\phi_i)_{\Gamma}$.
\item $(\forall \phi)_{\Gamma}= \forall x_1,...,x_n (\partial_{\Gamma} (x_1,...,x_n) \rightarrow \phi_{\Gamma})$
\item $(\exists \phi)_{\Gamma}= \exists x_1,...,x_n (\partial_{\Gamma} (x_1,...,x_n) \land \phi_{\Gamma})$
\end{enumerate}
The resulting map satisfies the desired conditions of the Proposition.
\end{proof}
\begin{definition}
The map $\text{Form}_L\to \text{Form}_{L'}:\phi\mapsto \phi_{\Gamma}$ constructed in the proof of Proposition \ref{prophod} is called the \textit{reduction} map of the interpretation $\Gamma$.
\end{definition}
\subsubsection{Complexity of interpretations}
The complexity of the defining formulas of an interpretation defines a measure of complexity of the interpretation itself:

\begin{definition} [\S 5.4$(a)$ \cite{Hod}]
An interpretation $\Gamma$ of an $L$-structure $M$ in an $L'$-structure $N$ is quantifier-free if the defining formulas of $\Gamma$ are quantifier-free. Other syntactic variants are defined analogously (e.g., existential interpretation).
\end{definition} 
\begin{rem}
Note that the reduction map of an existential interpretation sends \textit{positive existential} formulas to existential formulas but does \textit{not} necessarily send existential formulas to existential formulas. 
\end{rem}

\subsubsection{Recursive interpretations}

\begin{definition} [Remark 4, pg. 215 \cite{Hod} ]
Suppose $L$ is a recursive language. Let $\Gamma$ be an interpretation of an $L$-structure $M$ in the $L'$-structure $N$. We say that the interpretation $\Gamma$ is \textit{recursive} if the the map $\phi \mapsto \phi_{\Gamma}$ on unnested atomic formulas is recursive.
\end{definition}

\begin{rem} [Remark 4, pg. 215 \cite{Hod}]
If $\Gamma$ is a recursive interpretation of an $L$-structure $M$ in the $L'$-structure $N$, then the reduction map of $\Gamma$ is also recursive.
\end{rem}

\subsection{Decidability}
\subsubsection{Definition}
Fix a \textit{countable} language $L$ and let $\text{Sent}_L$ be the set of well-formed $L$-sentences, identified with $\N$ via some G\"odel numbering. 
Let $T$ be an $L$-theory, not necessarily complete. Recall that $T$ is decidable if we have an algorithm to decide whether $T \models \phi$, for any given $\phi \in \text{Sent}_L$. More formally, let $\chi_{T}:\text{Sent}_L \to \{0,1\}$ be the \text{partial} characteristic function of $T\subseteq \text{Sent}_L$. We say that $T$ is \textit{decidable} if $\chi_T$ is a partial recursive function. Let $M$ be an $L$-structure. We say that $M$ is decidable if $\text{Th}(M)$ is decidable. 
\subsubsection{Turing reducibility of theories}
See Definition 14.3 \cite{Papa} for the formal definition of a Turing machine with an oracle.  
\begin{definition}
A theory $T$ is \textit{Turing reducible} to a theory $T'$ if there is a Turing machine which decides membership in $T$ using an \textit{oracle} for $T'$. 
\end{definition}
\begin{rem}
In particular, if $T$ is Turing reducible to $T'$ and $T'$ is decidable, then so is $T$. 
\end{rem}

\begin{example}
If $\Gamma$ is a recursive interpretation of an $L$-structure $M$ in the $L'$-structure $N$, then $\text{Th}(M)$ is Turing reducible to $\text{Th}(N)$. Indeed, for any given $\phi\in \text{Sent}_L$, we have that $M\models \phi \iff N\models \phi_{\Gamma}$, where $\phi \mapsto \phi_{\Gamma}$ is the reduction map of $\Gamma$. This furnish us with an algorithm to decide whether $M\models \phi$ using an oracle for $\text{Th}(N)$. 
\end{example}

\subsection{Languages}
We write $L_{\text{oag}}=\{0,+,<\}$ for the language of ordered abelian groups and $L_{\text{rings}}=\{0,1,+,\cdot\}$ for the language of rings.  
\subsubsection{Valued fields language}
Let $L_{\text{val}}$ be the three-sorted language of valued fields, with sorts for the field, the value group and the residue field. 
\begin{itemize}
\item The field sort $\mathbf{K}$ is equipped with the language of rings $L_{\text{rings}}$.

\item The value group sort $\mathbf{\Gamma}$ is equipped with $L_{\text{oag}}$, together with a constant symbol for $\infty$.
\item The residue field sort $\mathbf{k}$ is equipped the language of rings $L_{\text{rings}}$.
\end{itemize}
We also have function symbols for the valuation map $v:\mathbf{K} \to \mathbf{\Gamma}$ and a residue map $\text{res}: \mathbf{K}\to \mathbf{k}$ (where $\text{res}(x)=0$ when $vx<0$ by convention). 
\begin{Convention} \label{convent1}
We shall write $x\in \mathcal{O}$ as an abbreviation of the formula $x\in \mathbf{K} \land vx\geq 0$.
\end{Convention}

\subsubsection{Ax-Kochen/Ershov language} \label{AKElanguage}
Historically, the Ax-Kochen/Ershov formalism also included a function symbol for a cross-section, i.e. group homomorphism $s:\Gamma \to K^{\times}$ satisfying $v\circ s=\text{id}_{\Gamma}$ (see \cite{AK} and \cite{Kochen}). One can also extend $s$ by defining $s(\infty)=0$. We write $L_{\text{val},\times}$ for the language $L_{\text{val}}$ enriched with such a cross-section symbol $s: \mathbf{\Gamma} \to \mathbf{K}$. 

\subsection{Ax-Kochen/Ershov Theorem}
Among other results, Ax-Kochen \cite{AK} and independently Ershov \cite{Ershov} obtained the following:
\begin{fact} [Ax-Kochen/Ershov]
The field $\Q_p$, equipped with the normalized cross-section $s:n\mapsto p^n$, is decidable in $L_{\text{val},\times}$.
\end{fact}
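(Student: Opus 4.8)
The plan is to exhibit a recursive axiomatization $T$ of $\mathrm{Th}(\Q_p,s)$ in $L_{\text{val},\times}$ and to prove that $T$ is complete. Since a consistent, recursively axiomatized, complete theory in a recursive language is decidable — the set of its consequences and the set of negations of its consequences are both recursively enumerable, hence the theory is recursive — this yields the result.

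For the axiomatization, let $T$ assert: (i) the field sort $\mathbf{K}$ has characteristic $0$ and is henselian for $v$, a recursive scheme with one sentence per degree lifting simple roots of polynomials over $\Oo$; (ii) the residue field sort $\mathbf{k}$ has exactly $p$ elements, a single sentence, so $\mathbf{k}\cong\F_p$; (iii) the value group sort $\mathbf{\Gamma}$ is a $\Z$-group, i.e. an ordered abelian group in which $1$ is the least positive element and $\forall x\,\exists y\,\bigvee_{i<n}(x=ny+i\cdot 1)$ holds for every $n\ge 1$; (iv) $v(p)=1$, so $p$ is a uniformizer and the valuation is absolutely unramified; (v) the cross-section axioms $v\circ s=\mathrm{id}_{\mathbf{\Gamma}}$, $s$ a group homomorphism $\mathbf{\Gamma}\to\mathbf{K}^{\times}$, and $s(1)=p$ (whence $s(n)=p^{n}$). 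Each clause is a recursive set of sentences, so $T$ is recursively axiomatized, and $(\Q_p,s\colon n\mapsto p^{n})\models T$; thus $\mathrm{Th}(\Q_p,s)$ is the deductive closure of $T$ as soon as $T$ is shown complete.

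To see $T$ is complete — this is the Ax--Kochen/Ershov transfer principle in the absolutely unramified mixed-characteristic setting — take $M_1,M_2\models T$ and pass to $\aleph_1$-saturated elementary extensions. Axioms (ii) and (iv) force the restriction of $v$ to the prime field to be the $p$-adic valuation, so both models contain, as a valued subfield, the henselization $\Q^{h}$ of $(\Q,v_p)$ — unique up to isomorphism, and rigid since $p$ is a uniformizer and $s$ is pinned down by $s(1)=p$. The value groups of $M_1,M_2$ are elementarily equivalent, being $\Z$-groups and Presburger arithmetic being complete, and their residue fields are both $\F_p$; one now runs a back-and-forth, extending a partial valued-field isomorphism over $\Q^{h}$ by alternately realizing residue-field elements, realizing value-group elements (using the cross-section to produce a matching element of $\mathbf{K}^{\times}$ of prescribed value), and passing to henselizations, using saturation to find partners at each stage. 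This gives $M_1\equiv M_2$, hence $T$ is complete and, modulo logical consequence, equal to $\mathrm{Th}(\Q_p,s)$; decidability follows as in the first paragraph.

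The \emph{main obstacle} is the completeness of $T$, and within it the Ax--Kochen/Ershov embedding lemma: one must show that an isomorphism between valued subfields of $M_1$ and $M_2$ extends whenever the ambient value groups and residue fields are elementarily matched, which in mixed characteristic requires controlling ramification and defect. This is exactly where the hypotheses that $p$ is a uniformizer (so the absolute ramification is trivial) and that the cross-section $s\colon n\mapsto p^{n}$ is part of the structure do the work: they collapse the wild content of the valued field into the value group, reducing the transfer to the completeness of Presburger arithmetic for $\mathbf{\Gamma}$ and the triviality of the theory of $\F_p$ for $\mathbf{k}$.
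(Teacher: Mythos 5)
Your overall strategy---write down a recursive axiomatization in $L_{\text{val},\times}$ (henselian, residue field $\F_p$, value group a $\Z$-group, $v(p)$ the least positive element, $s$ a cross-section with $s(1)=p$) and prove it complete, then conclude decidability---is the right one, and the axiom list itself is correct. Note, however, that the paper offers no proof of this statement: it is quoted as a Fact of Ax--Kochen and Ershov, with the cross-section version attributed to Kochen's Theorem 4 (see Remark \ref{akeremcross}); the only completeness argument actually carried out in the source is the analogous one for finite extensions of $\Q_p$, done by a coarsening argument. So your proposal must stand on its own as a proof of completeness, and there it has a genuine gap.

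The back-and-forth you describe extends a partial isomorphism by realizing residue-field elements (vacuous here, since the residue field is $\F_p$), realizing value-group elements via the cross-section, and passing to henselizations. These steps do not exhaust an $\aleph_1$-saturated model: the essential case is that of \emph{immediate} extensions, i.e. adjoining an element of $\mathbf{K}$ that contributes nothing new to the residue field or the value group---already an element of $\Z_p$ transcendental over $\Q$ is of this kind, and it is not reached by your three moves. Handling such elements is the actual content of the Ax--Kochen/Ershov embedding lemma in mixed characteristic; it requires pseudo-Cauchy sequence/maximality arguments, or a coarsening reduction to the equal characteristic $0$ theorem (the route taken in the source for finite extensions, where stable embeddedness of the residue field is also needed to keep the cross-section under control). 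Your sketch either omits this step or replaces it by an appeal to ``the Ax--Kochen/Ershov transfer principle in the absolutely unramified mixed-characteristic setting,'' which is essentially the statement being proved. Nor can one shortcut by citing the cross-section-free AKE theorem: $L_{\text{val},\times}$-elementary equivalence is strictly stronger than $L_{\text{val}}$-equivalence, since the image of $s$ is not definable without $s$ (Remark \ref{macintyrerem}), so the compatibility of the back-and-forth with the cross-section is exactly the point that must be argued rather than assumed.
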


\begin{rem} \label{akeremcross}
$(a)$ More generally, by Theorem 4 \cite{Kochen}, any unramified henselian field $(K,v)$, equipped with a \textit{normalized} cross-section (viz., $s(1)=p$) and with perfect residue field  $k$, is decidable in $L_{\text{val},\times}$. \\
$(b)$ Finite extensions of $\Q_p$ are also decidable in $L_{\text{val},\times}$, for a suitable choice of a cross-section (see \S 2.4 \cite{ThesisKartas}).
\end{rem}
Theorem 12 \cite{AK3} in fact shows that $\Q_p$ admits quantifier elimination in $L_{\text{val},\times}$ relative to the value group. The definable sets of the latter are perfectly understood by classical quantifier elimination results for Presburger arithmetic. Nevertheless, the following is worth noting:
\begin{rem} \label{macintyrerem}
The definable sets in the Ax-Kochen/Ershov language are complicated and are generally not definable in the valued field language. For instance, the image of the cross-section is not definable without the cross-section (see Example, pg. 609 \cite{Mac}). For this reason, at least for the purpose of studying definable subsets of the $p$-adics, the Ax-Kochen/Ershov formalism was superseded by Macintyre's language (see pg. 606 \cite{Mac}). 
\end{rem}
Despite Remark \ref{macintyrerem}, the Ax-Kochen/Ershov formalism has remained relevant. The fact that $p$-adic fields are decidable in such an expressive language is a strong result. As was mentioned in the introduction, this is in stark contrast with the fact that positive characteristic local fields are undecidable in $L_{\text{val},\times}$. 

%
%
%

\section{Local field approximation} \label{localfieldapprox}
\subsection{Motivation} \label{kkd}
A powerful philosophy, often referred to as the Krasner-Kazhdan-Deligne principle (due to \cite{Krasner}, \cite{Kazhdan} and \cite{Deligne}), says that a highly ramified $p$-adic field $K$ (e.g., $K=\Q_p(p^{1/n})$ with $n$ large) is in many respects "close" to a positive characteristic valued field. Although this is reflected in many aspects of $K$, perhaps the most elementary one is that the residue ring $\Oo_K/(p)$ "approximates" a positive characteristic valuation ring (see Lemma \ref{modp}, Remark \ref{kazrem}). Our goal in this section will be to prove Corollary \ref{0topinter}, which will serve as a bridge between Pheidas' work in positive characteristic and our problem in mixed characteristic (see \S \ref{motiv}). 
\subsection{Computations modulo $p$}

Let $K/\Q_p$ be a finite \textit{totally ramified} extension of degree $n$, with value group $\Gamma$ and residue field $k$. 
For completeness, we record here the following computation:
\bl \label{modp}
For each uniformizer $\pi$ of $\Oo_K$ we have an isomorphism $\Oo_K/(p)\cong \F_{p}[t]/(t^n)$, which maps the image of $\pi$ in $\Oo_K/(p)$ to the image of $t$ in $\F_{p}[t]/(t^n)$.
\el 
\begin{proof}
Write $\Oo_K=\Z_p[\pi]$, where $\pi$ is a root of an Eisenstein polynomial $E(t)=t^n+a_{n-1}t^{n-1}+....+a_0 \in \Z_p[t]$ (see Proposition 11, pg.52 \cite{Lang}). In particular, we have that $a_i\equiv 0 \mod p \Z_p$ and the reduction of $E(t)$ modulo $p\Z_p$ is equal to $\overline{E}(t)=t^n \in \F_{p}[t]$. We now compute 
$$\Oo_K/(p)=\Z_p[t]/(p,E(t))\cong \F_{p}[t]/(\overline{E}(t))=\F_{p}[t]/(t^n)$$
and observe that the above isomorphism sends $\pi+(p)$ to $t+(t^n)$.
\end{proof}
\begin{rem} \label{kazrem}
If $K,\pi$ are as above, then $\Oo_K/(\pi^n)=\Oo_K/(p) \cong \F_{p}[t]/(t^n)\cong \F_{p}[\![t]\!]/(t^n)$ and Kazhdan says that $K$ is $n$-close to $\F_{p}[\![t]\!]$ (see \S 0 \cite{Kazhdan}). In what follows, it will indeed be useful to think of $\Oo_K/(p)$ as being very close to $\F_{p}[\![t]\!]$ for large $n$ (see \ref{planofaction}).
\end{rem}

\subsubsection{Residue rings}
For each $n\in \N$, we view $\F_p[t]/(t^n)$ as an $L_t\cup P$-structure, where $L_t$ is the language of rings $L_{\text{rings}}$ together with a constant symbol for $t$ and $P$ is a unary predicate, whose interpretation is the set $\{0,1,t,...,t^{n-1}\}$. Note that we have tacitly replaced the equivalence class $t^k+(t^n)$ with $t^k$, which is harmless and common when dealing with truncated/modular arithmetic.  
\subsubsection{Interpreting $\F_p[t]/(t^n)$}
As a consequence of Lemma \ref{modp}, we obtain:
\bp \label{intercor}
Let $K/\Q_p$ be totally ramified of degree $n\in \N$ and $s:\Gamma \to K^{\times}$ be a cross-section. The structure $\F_p[t]/(t^n)$ in $L_t\cup P$ is $\exists \forall$-interpretable in $K$ in the language $L_{\text{val},\times}$. Moreover, the reduction map $\phi\mapsto \phi_{\Delta_K}$ does not depend on $K$ or the choice of $s$.
\ep 
\begin{proof}
Let $\gamma$ be the minimal positive element in $\Gamma$.  It is definable by the $\forall$-formula in the free variable $\gamma \in \mathbf{\Gamma}$ written below
$$\gamma >0\land \forall \delta \in \mathbf{\Gamma}^{>0} (\gamma \leq \delta) $$ 
henceforth abbreviated by $(\gamma \mbox{ minimal positive})$.

We now define a $1$-dimensional interpretation $\Delta_K$ of $\F_p[t]/(t^n)$ in $K$. Take $\partial_{\Delta_K}(x)$ to be the formula $x\in \mathcal{O}$. The reduction map on unnested atomic formulas is described as follows:
\begin{enumerate}
\item If $\phi(x)$ is the formula $x=0$ (resp. $x=1$ and $x=t$), we take $\phi_{\Delta_K}(x)$ to be the formula $\exists y\in \mathcal{O} (x=py)$ (resp. $\exists y\in \mathcal{O} (x=1+py)$ and $\exists \gamma \in \mathbf{\Gamma}[(\gamma \mbox{ minimal positive})\land (x=s(\gamma))]$). 

\item If $\phi(x,y)$ is the formula $x=y$, we take $\phi_{\Delta_K}(x,y)$ to be the formula $\exists z \in \mathcal{O}(x=y+pz)$.
\item If $\phi(x,y,z)$ is $x\diamond y=z$, then we take $\phi_{\Delta_K}$ to be the formula $\exists w \in \mathcal{O} (x\diamond y=z +pw)$, where $\diamond$ is either $\cdot$ or $+$.

\item If $\phi(x)$ is the formula $x\in P$, we take $\phi_{\Delta_K}(x)$ to be the formula $\exists \gamma \in \mathbf{\Gamma}^{\geq 0} (x=s(\gamma))$
\end{enumerate}
The coordinate map $f_{\Delta_K}: \Oo_K\to \F_p[t]/(t^n)$ is the projection modulo $p$, given by Lemma \ref{modp}. The isomorphism $\Oo_K/(p)\cong \F_p[t]/(t^n)$ identifies $\pi$ with $t$ and the image of $s(\Gamma^{\geq 0})$ in $\Oo_K/(p)$ is equal to $\{0,1,t,...,t^{n-1}\}$, via the above identification. One readily checks that the above data defines an $\exists \forall$-interpretation of the $L_t\cup P$-structure $\F_p[t]/(t^n)$ in the $L_{\text{val},\times}$-structure $K$. 

Finally, the reduction map $\phi \mapsto \phi_{\Delta_K}$ does not depend on $K$, because of the inductive construction of the reduction map (see Proposition \ref{prophod}) and the fact that $\phi \mapsto \phi_{\Delta_K}$ does not depend on $K$ when $\phi$ is any of the unnested atomic formulas listed above.
\end{proof}

\begin{Convention} \label{convent}
For the rest of the paper, fix once and for all a choice of a cross-section $s_K:\Gamma\to K^{\times}$, for each finite extension $K$ of $\Q_p$. For each such $K$, a cross-section does indeed exist and corresponds to a choice of a uniformizer for $\Oo_K$.
\end{Convention}

\bc \label{0topinter}
The asymptotic existential $L_t\cup P$-theory of $\{\F_p[t]/(t^n):n\in \N\}$ is Turing reducible to the asymptotic existential-universal $L_{\text{val},\times}$-theory of $\{K:[K:\Q_p]=e(K/\Q_p)<\infty\}$.
\ec 
\begin{proof}
For $K/\Q_p$ totally ramified of degree $n$, let $\Delta_K$ be the interpretation of the $L_t\cup P$-structure $\F_p[t]/(t^n)$ in the $L_{\text{val},\times}$-structure $K$, provided by Proposition \ref{intercor}. The reduction map of $\Delta_K$ does not depend on $K$ and will simply be denoted by $\phi\mapsto \phi_{\Delta}$. \\
\textbf{Claim: }For any existential $\phi \in \text{Sent}_{L_t\cup P}$, the sentence $\phi_{\Delta}$ is equivalent to an $\exists \forall$-sentence.
\begin{proof}
For sentences $\phi$ of the form $\exists x \psi(x)$, where $\psi(x)$ is a quantifier-free formula without negations, this follows from the fact that $\Delta_K$ is an $\exists \forall$-interpretation. We may therefore focus on formulas $\psi(x)$ of the form $f(x_1,...,x_m,t)\neq 0$ (resp. $f(x_1,...,x_m,t)\notin P$). Such a formula is logically equivalent to $\lnot f(x_1,...,x_m,y)= 0 \land y=t$ (resp. $\lnot f(x_1,...,x_m,y)\in P \land y=t$). Now $\lnot (f(x_1,...,x_m,y)=0)_{\Delta}$ (resp. $\lnot (f(x_1,...,x_m,y)\in P)_{\Delta}$) is universal and $(y=t)_{\Delta}$ is existential-universal. The conclusion follows.
\qedhere $_{\textit{Claim}}$ \end{proof}
For any $\phi \in \text{Sent}_{L_t\cup P}$ and any totally ramified extension $K/\Q_p$ of degree $n$, we have that $\F_p[t]/(t^n)\models \phi \iff K\models \phi_{\Delta}$. For any given $n\in \N$, there are finitely many totally ramified extensions $K/\Q_p$ of degree $n$ (Proposition 14 \cite{Lang}). It follows that 
$$\F_p[t]/(t^n)\models \phi \mbox{ for almost all } n\in \N \iff$$
$$ K\models \phi_{\Delta} \mbox{ for almost all }K\mbox{ with }[K:\Q_p]=e(K/\Q_p)<\infty$$
The conclusion follows from the Claim.
\end{proof}


\section{Truncations of $(\N;0,1,+,\mid_p)$}
\subsection{Motivation} \label{motiv}
\subsubsection{Pheidas' work} 
In Theorem 1 \cite{Pheid}, Pheidas showed that the Diophantine problem for $(\N;0,1,+,\mid_p)$ is undecidable. The proof goes by defining multiplication via a positive existential formula and using Matiyasevich's negative solution to Hilbert's tenth problem  \cite{Mat}. In Lemma 1$(c)$ \cite{Pheid}, it is shown that the Diophantine problem for $(\N;0,1,+,\mid_p)$ can be encoded in the existential theory of $\F_p(\!(t)\!)$ in $L_t$ with a predicate $P$ for $\{0,1,t,t^2,...\}$. It follows that the latter is also undecidable.
\subsubsection{Plan of action} \label{planofaction}
Our intuition that $\F_p[t]/(t^n)$ approximates $\F_p[\![t]\!]$ when $n$ is large, suggests that we can adapt Pheidas' strategy and show that the asymptotic existential theory of $\{\F_p[t]/(t^n):n\in \N\}$ is also undecidable in $L_t\cup P$. This will be established in Proposition \ref{asymprings} and in combination with Corollary \ref{0topinter} will pave the way for proving Theorem \ref{main} in Section \ref{mainsec}.
\subsection{$p$-divisibility}
The notion of $p$-divisibility was introduced by Denef in \cite{Denef}, in order to show that the Diophantine problem of a polynomial ring of positive characteristic is undecidable. Given $n,m\in \N$ and $p\in \mathbb{P}$, we write $n\mid_p m$ if $m=p^s n$, for some $s\in \N$. Let us also write $L_{p-div}=\{0,1,+,\mid_p\}$ for the language of addition and $p$-divisibility.
\subsubsection{Encoding $\mid_p$ in $\F_p[t]/(t^n)$} 
The following result is due to Pheidas:
\bl [Lemma 1$(a)$ \cite{Pheid}] \label{pheid}
Let $n,m\in \N$ with $0<n\leq m$. Then $n\mid_pm$ if and only if there exists $a \in \F_p[\![t]\!]$ such that $t^{-m}-t^{-n}=a^{-p}-a^{-1}$. 
\el 

\begin{rem} \label{pheidrem}
If $n \mid_p m$, then the proof of Lemma 1$(a)$ \cite{Pheid} provides $a=(t^{-n\cdot p^{s-1}}+t^{-n\cdot p^{s-2}}...+t^{-n})^{-1}$. Note that we have slightly rephrased the original formulation of Lemma 1$(a)$ \cite{Pheid}, so that the witness $a$ has positive valuation. 
\end{rem}
%
%
We shall use a truncated version of Lemma \ref{pheid}, whose proof is identical, modulo some additional bookkeeping:
\bl  \label{complemma}
Let $n,m,N \in\N$ with $0<n\leq m<N/3$. Then $n\mid_p m$ if and only if there exists $\alpha \in \F_p[t]/(t^N)$ such that $\alpha^p(t^n-t^m)=t^nt^m(1-\alpha^{p-1})$ and $\alpha^{3p} \neq 0$ in $\F_p[t]/(t^N)$.
\el 
\begin{proof}
$"\Rightarrow"$: Let $a =(t^{-n\cdot p^{s-1}}+t^{-n\cdot p^{s-2}}...+t^{-n})^{-1} \in \F_p[\![t]\!]$. After clearing denominators in Lemma \ref{pheid} , we get that $a^p(t^n-t^m)=t^nt^m(1-a^{p-1})$. Reducing the equation modulo $t^N$, yields $\alpha^p(t^n-t^m)=t^nt^m(1-\alpha^{p-1})$, where $\alpha$ is the image of $a$ in $\F_p[\![t]\!]/(t^N)\cong \F_p[t]/(t^N)$. Note that $v_t a^{3p}= 3m < N$ and thus $\alpha^{3p}\neq 0$ in $\F_p[t]/(t^N)$. \\
$"\Leftarrow"$: Let $n=p^rk$ and $m=p^li$ with $p\nmid k,i$. Let $\alpha$ be as in our assumption and $a\in \F_p[\![t]\!]$ be a lift of $\alpha$. Since $\alpha^{3p}\neq 0$, we get that $v_t a^p< N/3$. We will have by assumption that $t^{-m}-t^{-n}=a^{-p}-a^{-1}+t^{N-m-n}z a^{-p}$, for some $z \in \F_p[\![t]\!]$. Set $\varepsilon=t^{N-m-n}z a^{-p}$ and note that $v_t\varepsilon > 0$ because $m,n,v_t a^p< N/3$ and $v_t z\geq 0$.  By Lemma 1 \cite{Pheid}, we may find $a_1, a_2 \in \F_p[\![t]\!]$ such that $t^{-n}-t^{-k}=a_1^{-p}-a_1^{-1}$ and $t^{-m}-t^{-i}=a_2^{-p}-a_2^{-1}$. We compute that 
$$t^{-i}-t^{-k}=b^{p}-b+\varepsilon $$
where $b=a^{-1}+a_1^{-1}-a_2^{-1}$.

We claim that $i=k$. Otherwise, the left hand side must have negative valuation. Since $v_t \varepsilon > 0$, this forces the right hand side to have $p$-divisible valuation. This is contrary to the fact that $p\nmid i,k$. It follows that $i=k$ and thus $n\mid_p m$.
\end{proof}
\subsubsection{Interpreting $I_n$ in $\F_p[t]/(t^n)$} \label{truncatedversions}
Motivated by Lemma \ref{complemma}, we introduce  for each $n\in \N$ the $L_{p-div} \cup \{\infty\}$-structure $I_n=(\{0,1,...,n-1,\infty \};0,1,\infty,\oplus,\mid_p)$, where:
\begin{itemize}
 \item For $x,y\in I_n$, we have $x\mid_p y$ if $y=p^s x$ for some $s\in \N$ and $1\leq x,y< n/3$.\\
 \item  The operation $\oplus:I_n\times I_n\to I_n$ stands for truncated addition, i.e. given $x,y\in \{0,1,...,n-1 \}$ we have that $x\oplus y=x+y$ if $x+y<n$ and $\infty$ otherwise. Moreover, $\infty\oplus x=x\oplus \infty=\infty$ for all $x\in  \{0,1,...,n-1,\infty \}$.
\end{itemize}

\bp \label{inttorings1}
For each $n\in \N$, there is an $\exists$-interpretation $\Gamma_n$ of the $L_{p-div}\cup \{\infty\}$-structure $I_n$ in the $L_t\cup P$-structure $\F_p[t]/(t^n)$. Moreover, the reduction map $\phi\mapsto \phi_{\Gamma_n}$ does not depend on $n$.
\ep 
\begin{proof}

Take $\partial_{\Gamma_n}(x)$ to be the formula $x\in P$. The reduction map of $\Gamma_n$ on unnested atomic formulas is described as follows:
\begin{enumerate}
\item If $\phi$ is the formula $x=0$ (resp. $x=1$, $x=\infty$ and $x=y$), we take $\phi_{\Gamma_n} $ to be the formula $x=1$ (resp. $x=t$, $x=0$ and $x=y$). 

\item If $\phi(x,y,z)$ is $x+y=z$, then we take $\phi_{\Gamma_n}(x,y,z)$ to be the formula $x \cdot y=z$.

\item If $\phi(x,y)$ is the formula $x\mid_p y$, we take $\phi_{\Gamma_n}(x,y)$ to be the formula $\exists z (z^p(x-y)=x\cdot y(1-z^{p-1}) \land z^{3p}\neq 0)$.
\end{enumerate}

The coordinate map $f_{\Gamma_n}: \{0,1,t,...,t^{n-1}\} \to I_n$ is equal to $v|_{P}$, i.e., the valuation map $v$ restricted on $P$. Condition (3) of Definition \ref{interdef} is readily verified for unnested atomic formulas of type (1). For the formula described in (2), one has to use the isomorphism of monoids $(P, \cdot,1) \cong (\{0,1,...,n-1,\infty\},\oplus, 0)$. Using Lemma \ref{complemma}, one also verifies it for the unnested atomic formula of type (3). We deduce that $\Gamma_n$ is an interpretation. It is clear from the description of the reduction map on unnested formulas that $\Gamma_n$ is existential.

Finally, the reduction map $\phi \mapsto \phi_{\Gamma_n}$ does not depend on $n\in \N$, because of the inductive construction of the reduction map (see Proposition \ref{prophod}) and the fact that $\phi \mapsto \phi_{\Gamma_n}$ does not depend on $n$ when $\phi$ is any of the unnested atomic formulas listed above.

%
\end{proof}
\bc \label{inttorings}
The (resp. asymptotic) $L_{p-div}\cup \{\infty\}$-theory of $\{I_n:n\in \N\}$ is Turing reducible to the (resp. asymptotic) $L_t\cup P$-theory of $\{\F_p[t]/(t^n):n\in \N\}$.
\ec
\begin{proof}
Immediate from Proposition \ref{inttorings1}.
\end{proof}

\subsection{Undecidability of fragments} \label{undecoffragments}

Using the undecidability of the Diophantine problem over $(\N;0,1,+,\mid_p)$ (Theorem 1 \cite{Pheid}) as a black box, we shall prove that the asymptotic \textit{existential} theory of $\{I_n:n\in \N\}$ is also undecidable. 

The proof becomes more transparent by using matrix norms. Recall the \textit{maximum absolute row sum} norm $\|\cdot \|_{\infty}$ on the set of all matrices over $\Rr$, which is defined as $\| A \|_{\infty}= \max_{1\leq i \leq m} \sum_{j=1}^n |a_{ij} |$, for $A \in \text{M}_{m\times n} (\Rr)$. One readily checks that $\|\cdot \|_{\infty}$ is both sub-additive and sub-multiplicative, meaning that $\| A+B\|_{\infty}\leq \|A\|_{\infty}+\|B\|_{\infty}$ and $\| A\cdot B\|_{\infty}\leq \|A\|_{\infty}\cdot \|B\|_{\infty}$, whenever the operations are defined. Note also that when $A=(a_1,...,a_n)^{\top} \in \text{M}_{n\times 1} (\N)$, we have $\| A\|_{\infty}=\max \{a_i:i=1,...,n\}$.

For $A=(a_{ij})$ and $B=(b_{ij})$ in $ \text{M}_{m\times n} (\N)$, it will be convenient to use the notation $A\mid_p B$, which means that $a_{ij}\mid_p b_{ij}$ for each $1\leq i \leq m$ and $1\leq j \leq n$.
\bp \label{undecfragm}
The asymptotic existential $L_{p-div}\cup \{\infty\}$-theory of $\{I_n:n\in \N\}$ is undecidable. Moreover, let $T$ be the subtheory consisting of sentences $\phi $ of the form $\exists x \psi(x)$, where $x=(x_1,...,x_n)$ and $\psi(x)$ is a conjunction of a quantifier-free formula without negations with a formula of the form $\bigwedge_{i=1}^n N x_i \neq \infty$, for some $N\in \N$. Then $T$ is undecidable.
\ep
\begin{proof}
We shall encode the Diophantine problem of $(\N;0,1,+,\mid_p)$ in $T$. Let $\Sigma$ be an arbitrary system in variables $\mathbf{x}=(x_1,...,x_n)^{\top}\in \text{M}_{n\times 1} (\N)$ of the form

\begin{equation}
  \tag{$\Sigma$}
  \begin{cases}
 A_1 \mathbf{x}+\mathbf{b}_1=A_2 \mathbf{x}+\mathbf{b}_2 \\
  A_3 \mathbf{x}+\mathbf{b}_3 \mid_p A_4 \mathbf{x}+\mathbf{b}_4 
  \end{cases}
\end{equation}
where $A_i \in \text{M}_{m\times n} (\N)$ and $\mathbf{b}_i\in \text{M}_{m\times 1}(\N)$. Consider also the formula $\Sigma(x) \in \text{Form}_{L_{p-div}}$ associated with the system $\Sigma$.\\ 
\textbf{Claim:} We have that
$$(\N;0,1+,\mid_p)\models \exists x \Sigma(x)\iff T\models \exists x (\Sigma(x) \bigwedge_{i=1}^n 3 Mx_i\neq \infty )$$
where $M=\max_{1\leq i \leq 4} \{\|A_i \|_{\infty} + \|\mathbf{b}_i\|_{\infty}\}$.
\begin{proof}
 "$\Rightarrow$": Consider a witnessing tuple $\mathbf{c}=(c_1,...,c_n)^{\top} \in \text{M}_{n\times 1} (\N)$. If $\mathbf{c}= \mathbf{0}$, then the conclusion is clear. Otherwise, consider $m=\| \mathbf{c} \|_{\infty}=\max \{c_i:i=1,...,n\} \geq 1$ and choose $N\in \N$ such that $N>3\cdot M\cdot m$. 
 Using the sub-additive and sub-multiplicative properties of $\| \cdot \|_{\infty}$, we see that
 $$\| A_i  \mathbf{c}+\mathbf{b}_i \|_{\infty} \leq \|A_i\|_{\infty} \|\mathbf{c}\|_{\infty} +\|\mathbf{b}_i\|_{\infty}\leq m\cdot (\|A_i\|_{\infty} +\|\mathbf{b}_i\|_{\infty})\leq M\cdot m< N/3$$
for $i=1,...,4$. In particular, we get that both $\oplus$ and $\mid_p$ specialize to their ordinary counterparts in $\N$ and that $\Sigma(\mathbf{c})$ holds true in $I_N$, viewing $\mathbf{c}$ as a tuple in $I_N^n$. Each conjunct $3Mx_i\neq \infty$ also holds true for $c_i$ because $3Mc_i\leq 3M\cdot m<N$. \\
"$\Leftarrow$": Let $\mathbf{c}=(c_1,...,c_n)^{\top} \in I_N^n$ be a witness of the sentence $\exists x (\Sigma(x) \bigwedge_{i=1}^n 3Mx_i\neq \infty)$, for some $N>3\cdot M$. Since $3Mc_i\neq \infty$, we get that $c_i<N/3M$ for $i=1,...,n$. We therefore get that
$$ \| A_i \mathbf{c} +\mathbf{b}_i \|_{\infty} \leq \|A_i\|_{\infty} \|\mathbf{c}\|_{\infty} +\|\mathbf{b}_i\|_{\infty}< \frac{N}{3M} \cdot (\|A_i\|_{\infty} +\|\mathbf{b}_i\|_{\infty})\leq \frac{N}{3M}\cdot M= N/3 $$
for $i=1,...,4$.
In particular, we get that both $\oplus$ and $\mid_p$ specialize to their ordinary counterparts in $\N$. 
The corresponding tuple in $\N^n$ is the desired witness.  \qedhere $_{\textit{Claim}}$ \end{proof}
%
%
The conclusion follows from the fact that the Diophantine problem over $(\N;0,1,+,\mid_p)$ is undecidable (Theorem 1 \cite{Pheid}).
\end{proof}

\section{Proof of the main Theorem} 

\subsection{Truncated polynomial rings}
\bp \label{asymprings}
The asymptotic existential $L_t\cup P$-theory of $\{\F_p[t]/(t^n):n\in \N\}$ is undecidable.
\ep
\begin{proof}
Let $T$ be as in Proposition \ref{inttorings1}. We shall argue that the asymptotic existential $L_t\cup P$-theory of $\{\F_p[t]/(t^n):n\in \N\}$ is Turing reducible to $T$. The conclusion will then follow from Proposition \ref{undecfragm}. Let $\Gamma_n$ be the $\exists$-interpretation of the $L_{p-div}\cup \{\infty\}$-structure $I_n$ in the $L_t\cup P$-structure $\F_p[t]/(t^n)$, provided by Proposition \ref{inttorings1}. The reduction map of $\Gamma_n$ does not depend on $n\in \N$ and will simply be denoted by $\phi \mapsto \phi_{\Gamma}$. Since $\Gamma_n$ is existential, whenever $\psi(x)$ is a quantifier-free $L_{p-div}\cup \{\infty\}$-formula, the formula $\psi_{\Gamma}(x)$ is an existential formula. Moreover, from the description of the reduction map on unnested atomic formulas (see the proof of Proposition \ref{inttorings1}), we have $(\bigwedge_{i=1}^n N x_i \neq \infty)_{\Gamma}=\bigwedge_{i=1}^n  (N x_i \neq \infty)_{\Gamma}= \bigwedge_{i=1}^n x_i^N \neq 0$. It follows that $T$ is Turing reducible to the asymptotic existential $L_t\cup P$-theory of $\{\F_p[t]/(t^n):n\in \N\}$.
\end{proof}

\subsection{Totally ramified extensions}
\bp \label{totram}
The asymptotic $\exists \forall$-theory of $\{K:[K:\Q_p]=e(K/\Q_p)<\infty\}$ is undecidable in $L_{\text{val},\times}$.
\ep  
\begin{proof}
This follows from Corollary \ref{0topinter} and Proposition \ref{asymprings}. 
\end{proof}
\begin{rem}
$(a)$ The same proof applies verbatim to any \textit{infinite} collection of totally ramified extensions of $\Q_p$, e.g., $\{\Q_p(\zeta_{p^{n}}):n\in \N\}$ or $\{\Q_p(p^{1/n}):n\in \N\}$.\\
$(b)$ We do not know if the asymptotic existential theory of $\{K:[K:\Q_p]=e(K/\Q_p)<\infty\}$ is decidable or not in $L_{\text{val},\times}$. 
\end{rem}
\subsection{Finite extensions} \label{mainsec}
\begin{Theor} \label{mainagain}
The asymptotic $\exists \forall$-theory of $\{K:[K:\Q_p]<\infty\}$ is undecidable in $L_{\text{val},\times}$. 
\end{Theor}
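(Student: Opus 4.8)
The plan is to reduce Theorem~\ref{mainagain} to Proposition~\ref{totram}, which already establishes undecidability of the asymptotic $\exists\forall$-theory of the subclass $\{K:[K:\Q_p]=e(K/\Q_p)<\infty\}$ of totally ramified extensions. The key point is that the larger class $\{K:[K:\Q_p]<\infty\}$ contains the subclass of totally ramified extensions as a definable-in-a-uniform-way piece: a finite extension $K/\Q_p$ is totally ramified precisely when its residue field is $\F_p$, i.e., when the $L_{\text{val},\times}$-sentence $\theta$ asserting ``the residue field has exactly $p$ elements'' holds in $K$. This $\theta$ is a single fixed sentence (it can even be taken quantifier-free over the residue sort, or at worst an $\exists$-sentence), and for every finite extension $K/\Q_p$ we have $K\models\theta$ if and only if $e(K/\Q_p)=[K:\Q_p]$.

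First I would fix the sentence $\theta\in\text{Sent}_{L_{\text{val},\times}}$ expressing $|\mathbf{k}|=p$, and note it is logically equivalent to an $\exists$-sentence (assert the existence of $p$ distinct residue elements closed under the ring operations, with no further element), hence $\theta$ and $\lnot\theta$ cause no harm to the $\exists\forall$ complexity class when conjoined appropriately. Next I would observe the elementary counting fact that for each $n$ there are only finitely many $K/\Q_p$ with $[K:\Q_p]=n$ (Krasner's theorem / Proposition~14 \cite{Lang}), so ``almost all $K$ with $[K:\Q_p]<\infty$'' restricted by $\theta$ is cofinal with ``almost all $K$ with $[K:\Q_p]=e(K/\Q_p)<\infty$''. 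Concretely, for any sentence $\phi$, I claim that $\phi$ holds in almost all totally ramified $K$ if and only if the sentence $\theta\rightarrow\phi$ holds in almost all finite extensions $K/\Q_p$: indeed the finite extensions failing $\theta\rightarrow\phi$ are exactly the totally ramified ones failing $\phi$, so one side has a cofinite set of models among all finite extensions iff the other has a cofinite set among totally ramified ones. This gives a Turing reduction of the asymptotic $\exists\forall$-theory of $\{K:[K:\Q_p]=e(K/\Q_p)<\infty\}$ to that of $\{K:[K:\Q_p]<\infty\}$: to decide whether $\phi$ (an $\exists\forall$-sentence) is in the former, ask the oracle whether $\theta\rightarrow\phi$ is in the latter. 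The only subtlety is that $\theta\rightarrow\phi$ must itself stay inside the $\exists\forall$-fragment; since $\theta$ is $\exists$, $\lnot\theta$ is $\forall$, and $\phi$ is $\exists\forall$, the disjunction $\lnot\theta\lor\phi$ is (after prenexing) still $\exists\forall$ — and in fact it suffices to take $\theta$ to be the single quantifier-free residue-sort sentence ``$\mathbf{1}+\mathbf{1}+\cdots+\mathbf{1}=\mathbf{0}$ ($p$ times) and no residue element lies outside $\{0,1,\dots,p-1\}$'', which can be engineered to be universal, making $\lnot\theta$ existential and the whole thing cleanly $\exists\forall$.

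The main obstacle is purely bookkeeping: making sure the quantifier complexity is preserved under the reduction, i.e., that $\theta\rightarrow\phi$ lands in the $\exists\forall$-fragment when $\phi$ does. This is why it matters to choose $\theta$ with the right quantifier shape (a universal sentence over the residue sort asserting that the residue field has no more than $p$ elements, combined trivially with the always-true fact that a finite extension of $\Q_p$ has residue characteristic $p$). With $\theta$ universal, $\lnot\theta$ is existential, and $\lnot\theta\lor\phi$ prenexes to an $\exists\forall$-sentence, as required. Combining this reduction with Proposition~\ref{totram}, which says the target class of the reduction is undecidable, we conclude that the asymptotic $\exists\forall$-theory of $\{K:[K:\Q_p]<\infty\}$ is undecidable in $L_{\text{val},\times}$, which is exactly the assertion of Theorem~\ref{mainagain} (and hence of Theorem~\ref{main}).
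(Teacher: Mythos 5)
Your proposal is correct and is essentially the paper's own argument: the paper likewise reduces to Proposition~\ref{totram} by noting that $T_{tot}\models\phi$ iff the asymptotic theory of all finite extensions proves $(k=\F_p)\to\phi$, and that this implication is logically equivalent to an $\exists\forall$-sentence. Your extra care in choosing $\theta$ universal (so $\lnot\theta\lor\phi$ prenexes to $\exists\forall$) just makes explicit the bookkeeping the paper leaves implicit; the counting fact about finitely many extensions per degree is not actually needed here, since the set of finite extensions failing $\theta\to\phi$ coincides exactly with the set of totally ramified extensions failing $\phi$.
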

\begin{proof}
Let $T$ be the theory in question and $T_{tot}$ be the asymptotic theory of $\{K:[K:\Q_p]=e(K/\Q_p)<\infty\}$ in $L_{\text{val}}$ with a cross-section. We shall encode $T_{tot}$ in $T$. Given an $\exists \forall$-sentence $\phi$, we see that
$$T_{tot}\models \phi \iff T\models (k=\F_p) \to \phi $$
The formal counterpart of $(k=\F_p) \to \phi$ is logically equivalent to an $\exists \forall$-sentence. It follows that the asymptotic existential-universal $L_{\text{val},\times}$-theory of $\{K:[K:\Q_p]<\infty\}$ is Turing reducible to the asymptotic existential-universal $L_{\text{val},\times}$-theory of $\{K:[K:\Q_p]=e(K/\Q_p)<\infty\}$. The conclusion follows from Proposition \ref{totram}.
\end{proof}
\section{Final remarks}

\subsection{$L_{\text{val}}$ vs $L_{\text{rings}}$} \label{fin} 
In view of \cite{DMC} and by possibly increasing the complexity, one may replace $L_{\text{val},\times}$ with the $1$-sorted language of rings $L_{\text{rings}}$ together with a unary predicate $P$ for the image of the cross-section in $K$. We need to use the following:
\begin{fact}  [Theorem 2 \cite{DMC}]  \label{DMCfact}
There is an $\exists \forall$-formula $\phi(x)$ in $L_{r}$ such that $\Oo_K=\phi(K)$ for any henselian valued field $K$ with finite or pseudo-finite residue field.
\end{fact}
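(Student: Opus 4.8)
The plan is to exhibit a single $L_{\text{rings}}$-formula $\phi(x)$ of the shape $\exists\,\bar y\ \forall\,\bar z\ \theta(x,\bar y,\bar z)$ and to check, uniformly over the whole class, that $\phi(K)=\Oo_K$. The finite and pseudo-finite residue field cases will be treated together: the only facts about the residue field $k$ that enter are the solvability over $k$ of a few \emph{explicit} polynomial equations, and such a statement, being a first-order sentence true in every finite field, holds in every pseudo-finite field as well (these being precisely the infinite models of the theory of finite fields; one may equivalently invoke that pseudo-finite fields are pseudo-algebraically closed). I would build $\phi$ around Hensel's lemma, the real difficulty being the absence of a name for a uniformizer.

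The core input is Julia Robinson's observation. Suppose $K$ is Henselian, $\pi\in K$ is a uniformizer (for now assume the value group is discrete), and $k$ has characteristic $\ne 2$; then
$$ v(x)\ge 0\ \Longleftrightarrow\ \exists y\,\big(y^{2}=1+\pi x^{2}\big). $$
Indeed, if $v(x)\ge 0$ then $1+\pi x^{2}$ reduces to $1\in k$, which is a square and a simple root of $Y^{2}-1$ because $2\ne 0$ in $k$, so Hensel's lemma lifts a square root; and if $v(x)<0$ then $v(1+\pi x^{2})=1+2v(x)$ is odd (normalizing $v(\pi)=1$), hence not the value of any square, so there is no such $y$. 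Running the same argument with cubes yields $v(x)\ge 0\iff\exists y\,(y^{3}=1+\pi x^{3})$ whenever $k$ has characteristic $\ne 3$; since a residue characteristic is never both $2$ and $3$, the disjunction of the two conditions defines $\Oo_K$ in \emph{all} residue characteristics at once, which neatly removes the need to isolate the characteristic-$2$ case. Thus, \emph{given a name for a uniformizer}, $\Oo_K$ is cut out by a positive existential formula.

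The hard part is to get rid of the parameter $\pi$, since there is no $L_{\text{rings}}$-term naming a uniformizer, and one can show the complexity of a uniform definition cannot drop below $\exists\forall$, so an existential stand-in for $\pi$ alone will not do. The remedy, and the source of the $\exists\forall$ complexity, is to let the existential block \emph{guess} $\pi$ (and a bounded amount of auxiliary data), and to let the universal block \emph{certify} that the guess behaves like a uniformizer --- expressing this certification through the same square- and cube-equations, now imposed on the guessed elements, so that no clause ever refers circularly to $\Oo_K$. When the value group is not discrete (which can happen for pseudo-finite residue fields, where no uniformizer exists), the same mechanism is used to guess an element of positive value that is small enough relative to $x$ for the parity argument above to bite. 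One then checks, by a case analysis on the handful of configurations the certification allows, that the assembled $\exists\forall$-formula defines exactly $\Oo_K$ for every Henselian valued field $K$ with finite or pseudo-finite residue field. I expect this verification --- getting the universal block to genuinely force the guess to be (in effect) a uniformizer, while keeping $\phi$ independent of $K$ and of the residue characteristic --- to be the main obstacle.
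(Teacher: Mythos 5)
The paper does not prove this statement at all: it is imported verbatim as a Fact from Theorem 2 of \cite{DMC} (Cluckers--Derakhshan--Leenknegt--Macintyre), and the Remark following it explains the actual shape of that result --- an \emph{existential} formula in $L_{\text{rings}}\cup\{P_2^{AS}\}$, where $P_2^{AS}(x)=\exists y(x=y^2+y)$, the $\exists\forall$ ring formula then arising by unwinding the negative occurrences of the predicate. That construction does not proceed by guessing and certifying a uniformizer, and it works for arbitrary value groups. Your plan is therefore not a reconstruction of the cited proof, and, more importantly, it has two genuine gaps that I do not see how to repair within your framework.

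First, the parity mechanism at the heart of your formula is a statement about the \emph{value group}, not the residue field, and the class in the statement contains henselian fields whose value group is $2$-divisible (and $3$-divisible), e.g.\ the Hahn field $\mathbb{F}_q(\!(t^{\mathbb{Q}})\!)$, which has finite residue field. There, for $v(x)<0$ and \emph{any} element $\pi$ of positive value, $v(1+\pi x^2)=v(\pi)+2v(x)$ is automatically in $2\Gamma$, so whether $1+\pi x^2$ is a square is decided by its leading residue coefficient rather than by the sign of $v(x)$; many admissible choices of $\pi$ make the inner equation solvable for such $x$, and no notion of ``$\pi$ small relative to $x$'' helps, since the obstruction is group-theoretic, not metric. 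The same applies to the cube variant. Second, even in the discretely valued case your soundness direction needs the universal block to force $v(\pi)$ to be \emph{odd} (in effect, minimal), not merely positive: in $\mathbb{Q}_p$ with $p$ odd, $x=1/p$ and $\pi=p^2u$ for a suitable unit $u$, one has $1+\pi x^2=1+u$, which can be a square, so a guess of merely positive value passes any positivity-type certificate yet witnesses the formula for $x\notin\mathcal{O}_K$. But expressing ``$v(\pi)$ is minimal positive'' (or its parity) in $L_{\text{rings}}$ uniformly, without already having $\mathcal{O}_K$, is essentially the theorem itself; your proposal explicitly defers exactly this step as ``the main obstacle,'' so the crux is left unproved, and the first gap shows the intended certificate cannot exist in the stated generality anyway.
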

\begin{rem}
In fact, Theorem 2 \cite{DMC} is about the existence of an \textit{existential} formula in $L_{\text{rings}}\cup P_2^{AS}$, where $P_2^{AS}(x)=\exists y(x=y^2+y)$. However, any existential sentence in $L_{\text{rings}}\cup P_2^{AS}$ is equivalent to an $\exists \forall$-sentence in $L_{\text{rings}}$.
\end{rem}
By Fact \ref{DMCfact}, the asymptotic theory of $\{K:[K:\Q_p]<\infty\}$ in $L_{\text{val}}$ with a cross-section can be encoded in the asymptotic theory of $\{K:[K:\Q_p]<\infty\}$ in $L_{\text{rings}}\cup P$. By Theorem \ref{main}, the latter is undecidable.
Our use of the cross-section/predicate formalism is essential and we do not know whether the (asymptotic) theory of $\{K:[K:\Q_p]<\infty\}$ is decidable in $L_{\text{rings}}$. 

\subsection{Residue rings and $\F_p(\!(t)\!)$}
If the (asymptotic) theory of $\{K:[K:\Q_p]<\infty\}$ is decidable in $L_{\text{rings}}$, then by Corollary \ref{0topinter} this would also yield a positive answer to the following question:
\bq \label{ques}
Is the (asymptotic) theory of $\{\F_p[t]/(t^n):n\in \N\}$ decidable in $L_{\text{rings}}$?
\eq 
\bob \label{obex}
The asymptotic \textit{existential} theory of $\{\F_p[t]/(t^n):n\in \N\}$ is decidable in $L_{\text{rings}}$.
\eob 
\begin{proof}
Let $\F_p[t^{1/\infty}]$ be the direct limit of the \textit{injective} system $\varinjlim \F_p[t^{1/n}]$, where $\phi_{nm}:\F_p[t^{1/n}]\hookrightarrow \F_p[t^{1/m}]$ is the natural inclusion map for $n\mid m$. We write $\F_p[t^{1/n}]/(t)$ and $\F_p[t^{1/\infty}]/(t)$ for the quotients modulo $t$. Note that $\phi_{nm}(t\cdot \F_p[t^{1/n}])\subseteq t\cdot \F_p[t^{1/m}]$ and therefore the induced maps $\overline{\phi_{nm}}:\F_p[t^{1/n}]\hookrightarrow \F_p[t^{1/m}]$ and $\overline{\phi_n}:\F_p[t^{1/n}]/(t)\hookrightarrow \F_p[t^{1/\infty}]/(t)$ are also injective. \\
\textbf{Claim 1: } The asymptotic existential $L_{\text{rings}}$-theory of $\{\F_p[t]/(t^n):n\in \N\}$ is equal to $\text{Th}_{\exists}(\F_p[t^{1/\infty}]/(t))$. 
\begin{proof}
For each $n\in \N$, we have a ring isomorphism 
$$\F_p[t^{1/n}]/(t)\cong \F_p[t,X]/(X^n-t,t)\cong \F_p[X]/(X^n)\cong \F_p[t]/(t^n)$$ 
Now if $\phi\in L_{\text{rings}}$ is existential, then $\F_p[t^{1/\infty}]/(t)\models \phi$ if and only if $\F_p[t^{1/n}]/(t)\models \phi$ for all sufficiently large $n$. The conclusion follows.
\qedhere $_{\textit{Claim 1}}$ \end{proof}
Finally, we prove:\\
\textbf{Claim 2: } $\text{Th}_{\exists}(\F_p[t^{1/\infty}]/(t))$ is decidable in $L_{\text{rings}}$.

\begin{proof}
A straightforward adaptation of Proposition 6.2.1 \cite{KK} shows that
$$  \F_p[t^{1/\infty}]/(t) \models \exists x \bigwedge_{1\leq i,j\leq n} (f_i(x)=0\land g_j(x)\neq 0) \iff $$ 
$$ \F_p[\![t]\!](t^{1/\infty})\models \exists x\bigwedge_{1\leq i,j\leq n}(v(f_i(x))> v(g_j(x)))$$
where $f_i(x),g_j(x)\in \F_p[x]$ are any multi-variable polynomials in $x=(x_1,...,x_m)$ for $i,j=1,...,n$. Finally, the henselian valued field $\F_p(\!(t)\!)(t^{1/\infty})$ is existentially decidable in $L_{\text{val}}$ by Corollary 7.5 \cite{AnscombeFehm}.
\qedhere $_{\textit{Claim 2}}$ \end{proof}

\end{proof}
\begin{rem}
$(a)$ Observation \ref{obex} should be contrasted with Proposition \ref{asymprings}, which shows that the asymptotic existential theory of $\{\F_p[t]/(t^n):n\in \N\}$ is undecidable in $L_t\cup P$.\\
$(b)$ The proof of Observation \ref{obex} does not go through for the language $L_t$, as the ring isomorphisms in the proof of Claim 1 do not respect $t$. We do not know if the asymptotic existential theory of $\{\F_p[t]/(t^n):n\in \N\}$ is decidable in $L_t$. \\
$(c)$ On the other hand, the asymptotic \textit{positive} existential theory of $\{\F_p[t]/(t^n):n\in \N\}$ in $L_t$ is equal to $\text{Th}_{\exists^+}\F_p[\![t]\!]$ and is decidable by an \textit{effective} version of Greenberg's approximation theorem (see Theorem 3.1 and Theorem 6.1 \cite{vddbeck}).
\end{rem}

\section*{Acknowledgements}
I would like to thank E. Hrushovski, who suggested the key idea of this paper, and J. Koenigsmann for careful readings and various suggestions. I also thank J. Derakhshan for helpful discussions and two anonymous referees for several fruitful comments.
\bibliographystyle{alpha}
\bibliography{references2}
\Addresses
\end{document}